\documentclass[12pt]{article}

\usepackage[english]{babel}
\usepackage{comment}
\usepackage{setspace}
\usepackage[letterpaper,top=2cm,bottom=2cm,left=3cm,right=3cm,marginparwidth=1.75cm]{geometry}

\usepackage{amsmath}
\usepackage{mathrsfs}
\usepackage{graphicx}
\usepackage{algorithm}
\usepackage{algpseudocode}
\usepackage{subfig}

\usepackage[font={footnotesize,stretch=1.3}]{caption}

\providecommand{\keywords}[1] 
{
  \small	
  \textit{\textbf{Keywords}}---strongly regular graphs, spectral graph theory, spanning tree
}

\usepackage{tikz,tkz-euclide}
\usetikzlibrary{decorations.pathreplacing,angles,quotes}

\usepackage{algpseudocode}
\usepackage{amsthm}
\usepackage{amsfonts}
\usepackage{amssymb}
\usepackage{mathtools}

\usepackage[colorlinks=true, allcolors=blue]{hyperref}

\newtheorem{lemma}{Lemma}
\newtheorem{theorem}[lemma]{Theorem}

\let\srg\relax
\DeclareMathOperator{\srg}{srg}

\title{A Spanning Subgraph of Conway's 99-graph}
\author{Joseph Natal, Oleksiy Al-saadi}

\begin{document}
\date{}
\maketitle

\begin{abstract}

We show that in any hypothetical $G$ = srg(99,14,1,2) a spanning subgraph with $189$ edges is forced. Moreover, the graph formed out of the remaining edges has the spectrum $(12^1,-4^{30}, 3^{40},-2^6,0^7)$ expressed in Brouwer's notation.

\end{abstract}

\keywords{}

\section{Preliminaries} \label{sec:prelims}

\emph{Strongly regular graphs} have a history dating back to the mid-1900s and live at the intersection of statistics, Euclidean geometry, group theory, finite geometry, and extremal combinatorics. The state of this field is summarized in a comprehensive compendium by Brouwer \cite{Brouwer_Van_Maldeghem_2022}. A basic overview follows.

A strongly regular graph $G = (V,E)$ can be described using four arguments: $v = |V|$, $k$ is the degree of any vertex, $\lambda$ is the number of shared neighbors of any two adjacent vertices, and $\mu$ is the number of shared neighbors of any two non-adjacent vertices. By $\srg(v,k,\lambda,\mu)$ we denote the class of strongly regular graphs having these properties. Many parameter sets $(v,k,\lambda,\mu)$ give constraints that are impossible to satisfy, and so the corresponding graph does not exist. Different parameters restrict the local structure of a graph. For example, if $\lambda = 0$ then $G$ is triangle-free. Feasible parameters and the status of their existence are recorded in Brouwer's online tables \cite{Brouwer_web}. The adjacency matrix $A(G)$ satisfies the following equations

\begin{equation} \label{eq:srg1}
    A^2 + (\mu-\lambda)A + (\mu - k) I - \mu J = 0
\end{equation}
\begin{equation} \label{eq:srg2}
    A J  = k J
\end{equation}
where $I$ and $J$ are the identity matrix and the all-ones matrix respectively.



\clearpage
\section{Result}
Let $G$ be any srg(99,14,1,2), if it exists. We iteratively add edges to a pivot vertex $f\in V(G)$ until the degree $k$ condition is satisfied. This is done without loss of generality---each new chosen vertex can be chosen arbitrarily. Each of the vertices adjacent to $f$ must have exactly one common neighbor with $f$, so that the following induced subgraph exists 

\begin{figure}[!h]
    \centering
    \includegraphics[width=0.3\linewidth]{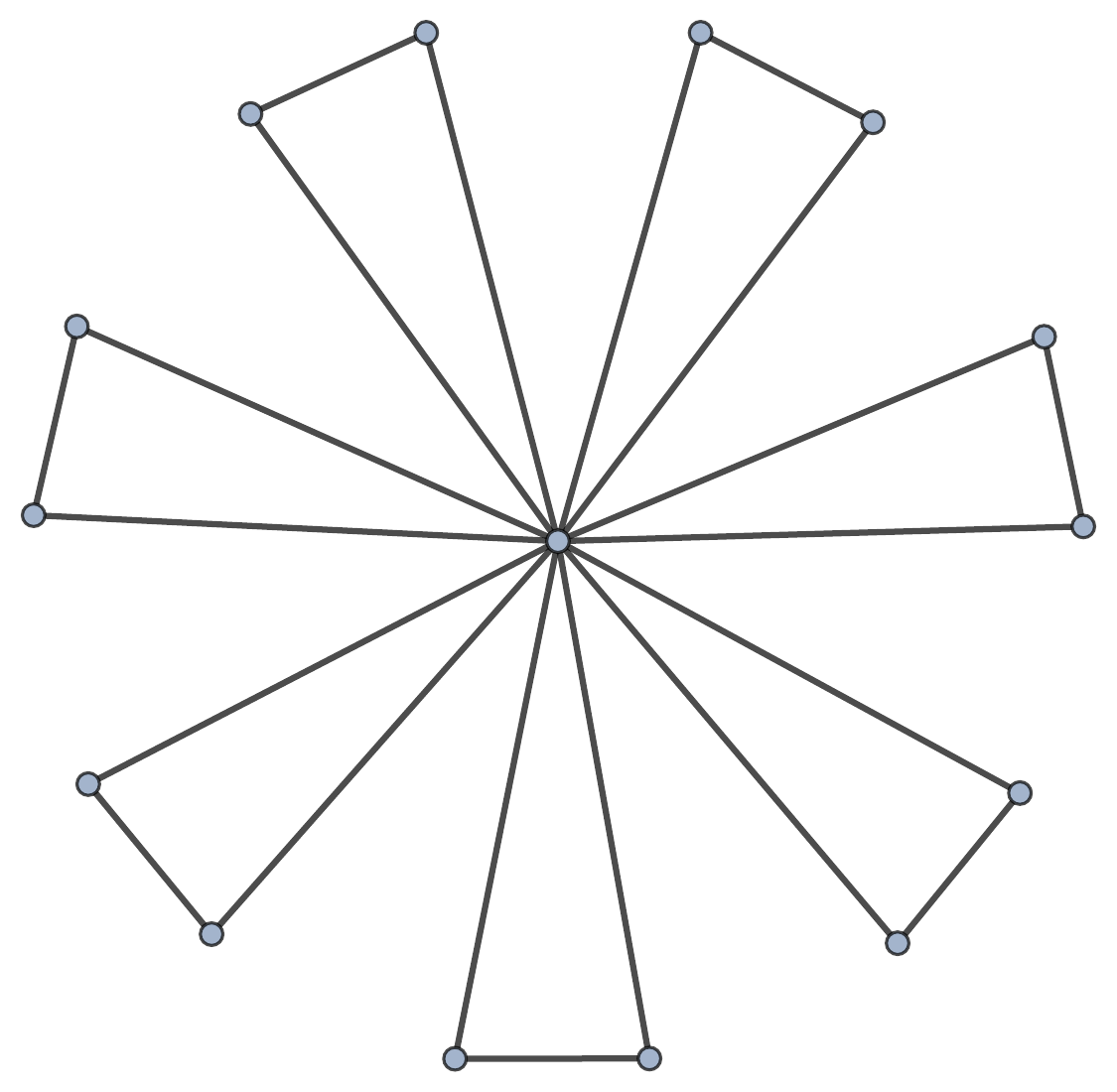}
    \caption{The induced subgraph $F$.}
    \label{fig:placeholder}
\end{figure}

\noindent which we call $F$. We call $F$ a\textit{ 7-bladed fan}, where a \textit{blade} denotes a $K_3$ in $F$. Let the set of remaining vertices be $S = G\setminus V(F)$. Then $|S| = |V| - k - 1 = 84$ vertices remain outside $F$, and all must share exactly two common neighbors with $f$. Let $v \in S$. Note that $v$ cannot be adjacent to two vertices of the same blade of $F$, otherwise the constraint $\lambda=1$ is contradicted. Thus, there are $\binom{k}{2} - \frac{k}{2} = 91 - 7 = 84$ unique and valid pairs of vertices in $F$ that $v$ can be adjacent to. Let $u\in S \setminus v$. Observe that $u$ cannot be adjacent to the same pair of vertices in $F$ that $v$ is adjacent to, otherwise $\mu=2$ is contradicted. Thus the map sending each \(v\in S\) to its two neighbors in \(F\) is injective into the set of \(84\) valid pairs. Since \(|S|=84\), the map is bijective. This implies that the following subgraph exists

\begin{figure}[!h]
    \centering
    \includegraphics[width=0.4\linewidth]{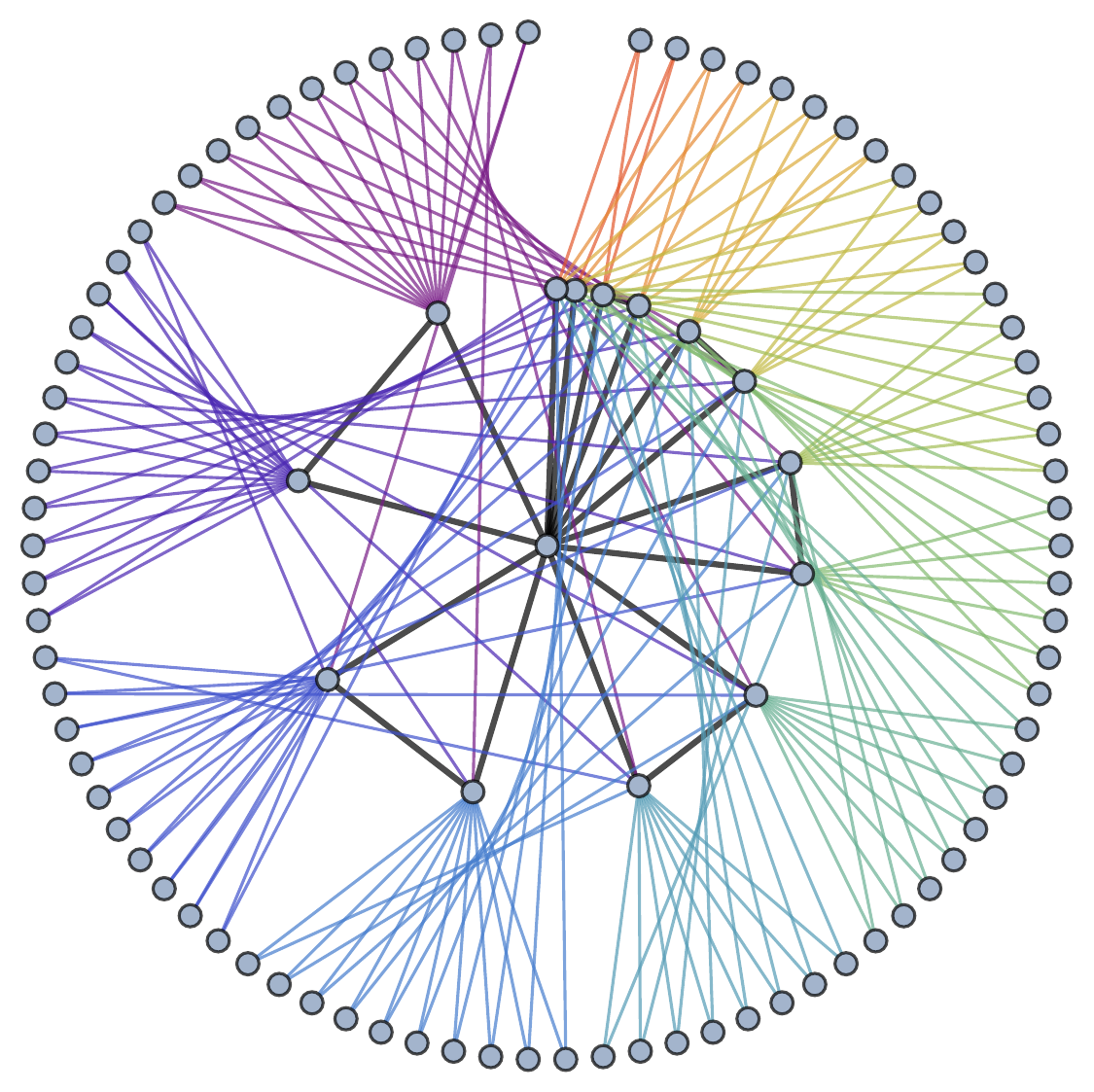}
    \caption{The spanning subgraph $H_1$.}
    \label{fig:enter-label}
\end{figure}

\clearpage

Let the adjacency matrix of $G$ be given by $A$, that of $H_1$ be given by $B$, and that of $H_2=G[S]$ be given by $C$. With the spanning subgraph, the adjacency matrices can be split into blocks

\begin{align*}
B=\begin{pmatrix}
\begin{array}{c|c}
B_0 & B_1^T\\
\hline
B_1 & 0
\end{array}
\end{pmatrix}
\end{align*}

\begin{align*}
A=\begin{pmatrix}
\begin{array}{c|c}
B_0 & B_1^T\\
\hline
B_1 & C
\end{array}
\end{pmatrix}\quad
A^2=\begin{pmatrix}
\begin{array}{c|c}
B_0^2 + B_1^T B_1 & B_0 B_1^T + B_1^T C\\
\hline
B_1 B_0 + C B_1& B_1 B_1^T + C^2
\end{array}
\end{pmatrix}  
\end{align*}
where the dimensions of $B_0,B_1$, and $C$ are respectively  $(k+1) \times (k+1)$, $|S| \times (k+1)$ and $|S| \times |S|$. The blocks $B_0$ and $B_1$ are completely known, representing every vertex in $H_1$ with degree $14$. Inserting these block matrices into Equations \ref{eq:srg1} and \ref{eq:srg2}, the lower right quadrant satisfies
\[
C^2 + (\mu - \lambda)C + (B_1 B_1^T + (\mu - k) I - \mu J) = 0 \]
and the lower left
\[
C B_1  = \lambda B_1 + \mu(J - B_1) - B_1 B_0
\]
and simplifying both equations
\begin{align} \label{eq:first}
C^2 + C + \Delta_1 = 0
\end{align}
\begin{align} \label{eq:second}
CB_1 = \Delta_2
\end{align}
This system is smaller and contains more linear constraints than the system given by Equations \ref{eq:srg1} and \ref{eq:srg2}. We speculate that this system will reduce computer-assisted search times for possible srg(99,14,1,2).

\begin{lemma} \label{lem:eigs}
    The eigenvalues of $C$ are $(12^1,\{3^f,-4^g\},\{1^h,-2^i\},\{0^j,-1^k\})$ where $f,g, \hdots, k$ are multiplicities.
\end{lemma}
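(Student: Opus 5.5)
The plan is to exploit Equation \ref{eq:first}: it says that $C$ is a root of a quadratic whose constant term $\Delta_1$ is an explicitly known matrix. First I will make $\Delta_1$ explicit. Reading off the lower-right quadrant, $\Delta_1 = B_1B_1^T - (k-\mu)I - \mu J = B_1B_1^T - 12I - 2J$. Since $CJ=JC$, Equation \ref{eq:first} gives $\Delta_1 = -(C^2+C)$, so $\Delta_1$ commutes with $C$. Both $C$ and $\Delta_1$ are symmetric, so they can be diagonalized simultaneously. On a common eigenvector $x$ with $\Delta_1 x = \delta x$, the eigenvalue $\theta$ of $C$ must satisfy $\theta^2+\theta+\delta=0$. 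It therefore suffices to compute the spectrum of $\Delta_1$, or equivalently of $B_1B_1^T$ together with $J$.

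Second, I will handle the all-ones vector $\mathbf{1}$ separately. Every $v\in S$ has exactly two neighbours in $F$ and none of them is $f$, so $v$ has degree $14-2=12$ inside $S$. Hence $C\mathbf{1}=12\,\mathbf{1}$, which supplies the eigenvalue $12^1$. As a consistency check, $B_1B_1^T\mathbf{1}=24\,\mathbf{1}$, so $\delta=24-12-168=-156=-(144+12)$. Every other common eigenvector may then be taken orthogonal to $\mathbf{1}$. On $\mathbf{1}^\perp$ we have $J=0$, so $\Delta_1 = B_1B_1^T-12I$ there.

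Third, I will compute the spectrum of $B_1B_1^T$ through the $15\times15$ matrix $B_1^TB_1$, which has the same nonzero eigenvalues. The column of $B_1$ indexed by $f$ is zero. On the 14 blade vertices the diagonal entries are $12$, the number of $S$-neighbours of each blade vertex. By the bijection onto the $84$ valid pairs, the off-diagonal entries are $1$ for two vertices in different blades and $0$ for two vertices in the same blade. Thus $B_1^TB_1$ restricted to the blade vertices equals $11I+J-P$, where $P$ is the permutation matrix of the perfect matching pairing the two vertices of each blade. Since $J$ and $P$ commute, this matrix has eigenvalue $24$ on $\mathbf{1}$. On $\mathbf{1}^\perp$ the eigenvalues are $12$ for the $P$-eigenvalue $-1$ (multiplicity $7$) and $10$ for the $P$-eigenvalue $+1$ (multiplicity $6$). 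Consequently $B_1B_1^T$ has spectrum $24^1,12^7,10^6,0^{70}$, with $\mathbf{1}$ being the $24$-eigenvector.

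Finally, on $\mathbf{1}^\perp$ the possible values of $\delta$ are therefore $0,-2,-12$. The corresponding quadratics are $x^2+x=0$, $x^2+x-2=0$ and $x^2+x-12=0$, with root pairs $\{0,-1\}$, $\{1,-2\}$ and $\{3,-4\}$ respectively. This gives exactly the claimed form, with $f+g=70$, $h+i=6$ and $j+k=7$. I expect the main obstacle to be the bookkeeping in the third step. One must get the off-diagonal entries of $B_1^TB_1$ right, which uses the bijection established earlier. One must also justify that the nonzero spectrum transfers from $B_1^TB_1$ to $B_1B_1^T$ and that $\mathbf{1}$ is a common eigenvector, so that the decomposition along $\mathbf{1}$ and $\mathbf{1}^\perp$ is legitimate.
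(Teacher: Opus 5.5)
Your proof is correct and has the same skeleton as the paper's. $\Delta_1=-(C^2+C)$ commutes with $C$, a common eigenbasis turns Equation \ref{eq:first} into $\theta^2+\theta+\delta=0$, and the lemma reduces to knowing the spectrum of $\Delta_1$. The genuine difference is in that last step. The paper reports $(-156^1,-12^{70},-2^6,0^7)$ as the output of a computer algebra system, while you derive it by hand. The bijection onto valid pairs makes $B_1^TB_1$ on the blade vertices equal to $11I+J-P$. Its spectrum $24^1,12^7,10^6$ passes to $B_1B_1^T$, with $0^{70}$ filling the remaining dimensions. Splitting off $\mathbf{1}$ and subtracting $12I+2J$ then reproduces exactly the paper's list.

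Your route gives a self-contained proof that can be checked without software. It also explains the multiplicities: $70=84-14$ is the dimension of the kernel of $B_1B_1^T$, and the $7$ and $6$ come from the $-1$- and $+1$-eigenspaces of the blade matching $P$, the latter with $\mathbf{1}$ removed. It also shows explicitly why the other root $-13$ of $\theta^2+\theta-156$ cannot occur: the $-156$-eigenspace is spanned by $\mathbf{1}$, on which $C$ acts as $12$. The paper covers this only by saying the eigenvalue $12$ is forced by regularity. The paper's version is shorter but rests on an unreported computation.

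A minor point: you do not need $CJ=JC$ to get $\Delta_1=-(C^2+C)$, since Equation \ref{eq:first} gives it directly.
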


\begin{proof}
From \ref{eq:second}, it is clear that \(\Delta_1\) is a polynomial in \(C\), so \(C\) and \(\Delta_1\) commute and are simultaneously diagonalizable because both are real symmetric.
If \(x\) is an eigenvector of \(C\) with eigenvalue \(\theta\), then
\[ \Delta_1x=-(\theta^2+\theta)x. \]
Thus if \(\ell\) is the corresponding eigenvalue of \(\Delta_1\),
\[ \theta^2+\theta+\ell=0\] 
giving the solution $\theta = \frac{1}{2}(-1 \pm \sqrt{1 - 4 \ell} )$. The eigenvalues of $\Delta_1$ are easily resolvable with a computer algebra system. They are, in the canonical notation where the algebraic multiplicity is in the exponent, $(-156^1,-12^{70},-2^6,0^7)$. Solving the quadratic equation for the eigenvalues of $C$ gives \[(12^1,\{3^f,-4^g\},\{1^h,-2^i\},\{0^j,-1^k\})\]
where the eigenvalue of $12$ is forced because the graph is 12-regular.
\end{proof}
\begin{theorem}
    The multiplicities that satisfy the expression in the statement of Lemma \ref{lem:eigs} are $(f,g,h,i,j,k) = (40,30,0,6,7,0)$, yielding the spectrum $(12^1,-4^{30}, 3^{40},-2^6,0^7)$ for $H_2$.
\end{theorem}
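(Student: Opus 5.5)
Since $\Delta_1$ is a polynomial in $C$, the proof of Lemma \ref{lem:eigs} gives a partition of the spectrum of $C$ by the eigenspaces of $\Delta_1$. The eigenvalue $-12$ of $\Delta_1$ has multiplicity $70$ and splits as $f+g=70$, the eigenvalue $-2$ gives $h+i=6$, and the eigenvalue $0$ gives $j+k=7$. The eigenvalue $-156$ is the all-ones direction, giving $\theta=12$. This leaves three unknowns, and I will pin them down with trace identities for the $0/1$ symmetric matrix $C$ of the $12$-regular graph $H_2$ on $84$ vertices.

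\textbf{Trace of $C$.} Since $C$ has zero diagonal,
\[12+3f-4g+h-2i+0\cdot j-k=0.\]

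\textbf{Trace of $C^2$.} We have $\operatorname{tr}C^2 = 84\cdot 12=1008$, so
\[144+9f+16g+h+4i+k=1008.\]
Substituting $g=70-f$, $i=6-h$, $k=7-j$ into the second identity gives $144+1120-7f+24-3h+7-j=1008$, that is, $7f+3h+j=287$. The first identity becomes $12+7f-280+h-12+2h-7+j=0$, that is, $7f+3h+j=287$ again. So the two identities are dependent, and I need one more relation. This is the expected main obstacle.

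\textbf{Trace of $C^3$.} I would use $\operatorname{tr}C^3 = 6\,t(H_2)$, where $t(H_2)$ is the number of triangles in $H_2$. Each edge of $G$ lies in exactly one triangle, since $\lambda=1$. Hence $G$ has $99\cdot14/2/3=231$ triangles. The triangles meeting $F$ are the $21$ triangles through $f$ and its neighbours, together with those containing exactly one vertex $x$ of $N(f)$. Each such $x$ has $12$ neighbours in $S$, and these pair up into $6$ triangles. This gives $14\cdot6=84$ triangles, so $t(H_2)=231-21-84=126$. Therefore $\operatorname{tr}C^3=756$, and
\[1728+27f-64g+h-8i-k=756.\]

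\textbf{Trace of $C^4$, if needed.} Rather than use $C^4$, I can use the structure directly. In $H_2$ every edge lies in exactly one triangle, since edges of $G$ inside $S$ keep their unique common neighbour only if it lies in $S$. I would check this, and it may fail; if so, I would instead compute $\operatorname{tr}C^4$ from equation (\ref{eq:first}), via $C^4=(C+\Delta_1)^2$.

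\textbf{Solving the system.} Substituting into the cubic identity gives
\[1728+27f-4480+64f+h-48+8h-7+j=756,\]
that is, $91f+9h+j=3563$. Combined with $7f+3h+j=287$, subtraction gives $84f+6h=3276$, so $14f+h=546$. With $0\le h\le 6$ and $h\equiv 0 \pmod{14}$ forced by $546=14\cdot39$, we get $h=0$ and $f=39$.

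This contradicts the claimed $f=40$, so I must recheck the triangle count. If $f=40$ and $h=0$, the linear relation $7f+3h+j=287$ forces $j=7$, and then the count yields $t(H_2)=140$. The delicate step is therefore the exact count of triangles meeting $F$, and I would recount it carefully. Once $t(H_2)$ is correct, integrality together with the ranges $0\le h\le6$ and $0\le j\le7$ forces a unique solution. Then $g=70-f=30$, $i=6$ and $k=0$, which gives the spectrum $(12^1,-4^{30},3^{40},-2^6,0^7)$.
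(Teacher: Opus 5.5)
Your plan is the paper's: the eigenspaces of $\Delta_1$ give $f+g=70$, $h+i=6$, $j+k=7$. The two trace identities then collapse to the single relation $7f+3h+j=287$. You are right that they are dependent, because $\theta^2=-\theta-\ell$ on each eigenspace of $\Delta_1$. The missing relation therefore has to come from $\operatorname{tr}C^3=6\,t(H_2)$. There is a genuine gap exactly at that step: your triangle count is wrong. Your final paragraph does not repair it; it reads $t(H_2)=140$ off the claimed answer, which is circular and proves nothing.

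The error is the ``$21$ triangles through $f$ and its neighbours.'' Since $N(f)$ induces a perfect matching, the only triangles inside $F$ are the $7$ blades. The number $21$ counts the \emph{edges} of $F$, which the blades cover three at a time. You should also state why no triangle has exactly two vertices in $F$: every edge of $F$ already has its unique triangle inside $F$.

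So $t(H_2)=231-7-84=140$, matching the paper. The paper verifies this locally: each $v\in S$ lies in $7$ triangles of $G$. Exactly two of these contain one of its two neighbours in $N(f)$, and they are distinct triangles because those neighbours are non-adjacent. That leaves $5$ triangles in $H_2$, and $84\cdot5/3=140$.

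With $\operatorname{tr}C^3=840$, your own elimination gives $91f+9h+j=3647$, hence $14f+h=560$. This forces $h=0$, $f=40$, $j=7$, $g=30$, $i=6$, $k=0$. This is slightly more direct than the paper's listing of the $8$ nonnegative solutions and testing each.

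Your value $126$ leads to $f=39$ and then $j=287-273=14>7$, so there is no admissible spectrum at all. That would prove $\srg(99,14,1,2)$ does not exist, which alone should have signalled the miscount. Also, the claim in your $C^4$ paragraph is false: the $84$ edges $ss'$ of the triangles $\{x,s,s'\}$ with $x\in N(f)$ lie in no triangle of $H_2$. That paragraph is not needed, though.
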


\begin{proof}
    
The remaining multiplicities must satisfy the following linear system

\begin{gather}
f + g = 70 \nonumber\\
h + i = 6 \nonumber \\
j + k = 7 \nonumber \\
1 + f + g + h + i + j + k = 84 \nonumber\\
\text{Tr}(C) = 12 + 3 f - 4 g + h - 2 i - k = 0 \nonumber\\
\text{Tr}(C^2) = \text{Tr}(-\Delta_1) = 144 + 9 f + 16 g + h + 4 i + k = 1008 \nonumber
\end{gather}

The first four equations follow from the multiplicities of the eigenspaces of $\Delta_1$. After excluding solutions with negative integers, 8 integer solutions remain to the system. Only one has 140 triangles, calculated from the spectrum using $\#K_3 = \text{Tr}(C^3)/6$. The graph $H_2$ must have 140 triangles as proven by Cesarz et al. \cite{cesarz2023automorphismgroupputativeconway} (Lemma 2.4), and a simple argument why this is true follows: every $v\in S$ forms two separate triangles out of the two vertices in $F$ that $v$ is adjacent to. These triangles are not contained in $H_2$. That leaves the remaining 5 blades of a 7-bladed fan with $v$ as the root entirely in $H_2$. Now $|S|=84$ and $\frac{84\cdot 5}{3} = 140$. The solution spectrum with 140 triangles is $(12^1,-4^{30}, 3^{40},-2^6,0^7)$.
\end{proof}

\bibliographystyle{elsarticle-num}
\bibliography{sample}

\end{document}